\newtheorem{theorem}{Theorem}[section]
\newtheorem{lemma}[theorem]{Lemma}
\newtheorem{corollary}[theorem]{Corollary}
\newtheorem{remark}[theorem]{Remark}
\newtheorem{proposition}[theorem]{Proposition}
\DeclareMathOperator{\im}{im}
\DeclareMathOperator{\Hom}{Hom}
\DeclareMathOperator{\depth}{depth}
\DeclareFontFamily{U}{mathx}{\hyphenchar\font45}
\DeclareFontShape{U}{mathx}{m}{n}{ <5> <6> <7> <8> <9> <10> <10.95>
  <12> <14.4> <17.28> <20.74> <24.88> mathx10 }{}
\DeclareSymbolFont{mathx}{U}{mathx}{m}{n}
\DeclareMathAccent{\widecheck}{0}{mathx}{"71}
\DeclareMathAccent{\wideparen}{0}{mathx}{"75}
\theoremstyle{plain}
\begin{document}

\title{\textbf{Generic doublings of almost complete intersections of codimension 3 }}

\author[Jai Laxmi]{Jai Laxmi}
\address{Department of Mathematics, University of Connecticut, Storrs, CT 06269 }
\email{jai.laxmi@uconn.edu, laxmiuohyd@gmail.com}

\subjclass[2010]{13H10, 15A66,15A75.}
\keywords{ Cohen Macaulay rings, Gorenstein ring, spinor coordinates, spinor structures, almost complete intersection, reflexive modules, conormal modules,normal modules.}

%
%


\maketitle

\pagenumbering{arabic}
\begin{abstract}
We study Gorenstein ideals of codimension $4$ derived from generic doublings of almost complete intersection perfect ideals of codimension $3$. 
 We also investigate spinor coordinates of such Gorenstein ideals with $8$ and $9$ generators. For an ideal $J$ of commutative ring $R$, the $R/J$ module $J/J^2$ is called conormal module and $R/J$-dual of $J/J^2$ is called normal module. We  study properties of conormal and normal modules of almost complete intersection perfect ideals of codimension $3$.  
\end{abstract}

\section{Introduction}
The problem of classifying  Gorenstein ideals of codimension $4$ was around ever since Buchsbaum and Eisenbud classified  Gorenstein ideals of codimension $3$ in \cite{BE1}. The first results related to the structure of codimension $4$ case were obtained by Kustin and Miller \cite{KM}. We study Gorenstein ideals of codimension $4$ obtained from generic doubling of almost complete intersection perfect ideals of codimension $3$. 

Christensen, Veliche and Weyman in \cite{CVW} gave structures of two types of generic form of almost complete intersection ideals of codimension $3$, of Cohen Macaulay even  and odd type, see \cite[Prop. 2.2,2.4]{CVW}. Moreover in \cite[Theorem 4.1]{CVW}, they show that any minimal free resolution of almost complete intersection perfect ideal $J_n$ over a commutative local ring $R$, of codimension $3$ and Cohen Macaulay type $n$, is a specialization of one of the type, either Cohen Macaulay even or odd type stated in \cite[Prop. 2.2,2.4]{CVW}. So it is enough to study generic doublings of two types of generic form of almost complete intersection ideals of codimension $3$, of Cohen Macaulay even  and odd type. We denote generic form of almost complete intersection ideals of Cohen Macaulay type $n$ given in \cite[Prop. 2.2,2.4]{CVW} as $J_n$ over a polynomial ring $R$ in generic variables.

In Section \ref{almostci} we recall structure of generic form of almost complete intersection ideals of codimension $3$, of Cohen Macaulay even  and odd type, see \cite[Prop. 2.2,2.4]{CVW}.

We set generic almost complete intersection ring of type $n$ as $S_n=R/J_n$ with a canonical module $\omega_{S_n}$. Section \ref{Hom} deals with equivariant generators of $\Hom_{S_n}(\omega_{S_n},S_n)$.  The $S_n$-module $J_n/J_n^2$ is called conormal module and $S_n$-dual $J_n/J_n^2$ is called normal module. In Section \ref{NC} we study properties of conormal and normal modules of $J_n$. Matsouka \cite{M} proved that there is an embedding $\varphi_n:\omega_{S_n}\rightarrow S_n^4$ such that $S^4_n/\varphi_n(\omega_{S_n})\simeq J_n/J_n^2$. Using embedding $\varphi_n$ we show that $S_n$-module  $\Hom_{S_n}(\omega_{S_n},S_n)$ has $4$ generators and conormal module of $J_n$ is a reflexive $S_n$-module. Moreover we observe  that $\Hom_{S_n}(\omega_{S_n},S_n)$ is a maximal Cohen Macaulay $S_n$ module.

In Section \ref{GenD} we construct generic doublings of $S_n$ of Cohen Macaulay odd and even type using generators of $\Hom_{S_n}(\omega_{S_n},S_n)$. Such generic doublings give Gorenstein rings of codimension $4$.   
By \cite[Theorem 4.2]{CLW}, there exist spinor structures on generic doublings of $S_n$. We investigate spinor coordinates of Gorenstein ideals with $9$ generators obtained from generic doublings of $S_5$. There are  two examples in  Gorenstein ideals with $9$ generators in \cite[Example 5.3]{CLW} where none of the spinor coordinates are among minimal generators of ideals. In Proposition \ref{spin9} we show that there are $5$ spinor coordinates among minimal generators of Gorenstein ideals obtained from doublings of $S_5$. In Section \ref{spin8} we study spinor coordinates of generic doublings of $S_4$.

\section{Structure of almost complete intersection rings}\label{almostci}
We recall structures of almost complete intersections of codimension $3$  given by Christensen, Veliche, and Weyman in \cite{CVW}. By \cite[Theorem 4.1]{CVW}, any minimal free resolution of almost complete intersection perfect ideal $J_n$ over a commutative local ring $R$, of codimension $3$ and Cohen Macaulay type $n$, is a specialization of one of the type, either Cohen Macaulay even or odd type stated in \cite[Prop. 2.2,2.4]{CVW}. 


\subsubsection{Structure of almost complete intersection for $n=2m+1$} 
We record results from \cite[Prop. 2.2]{CVW}. Let $\mathbb{K}$ be a field. 
Consider a $(2m+1)\times (2m+1)$ generic skew symmetric matrix $C=(c_{ij})$, and $3\times (2m+1)$ generic matrix $U=(u_{kl})$ with $1\leq k\leq 3$ and $1\leq l\leq 2m+1$. Let $R$ be a polynomial ring over $\mathbb{K}$ on the entries of $C$ and $U$. Set $F=R^{2m+1}$ and $G=R^3$.  

Set $J_{2m+1}=\langle x_1,x_2,x_3,x_4\rangle$  where $x_1=C^{m-1}\wedge u_1\wedge u_2\wedge u_3$, $x_2=C^m\wedge u_1$, $x_3=C^m\wedge u_2$ and $x_4=C^m\wedge u_3$ where $u_i$ are ith row of $U$. Then 

\begin{equation}\label{almostodd}
0\xrightarrow{} F\xrightarrow{d_3} F^*\oplus G^*\xrightarrow{d_2} R\oplus G\xrightarrow{d_1} R\rightarrow R/J_{2m+1}\rightarrow 0 
\end{equation}

is minimal free resolution of $R/J_{2m+1}$ where 
$$d_1=\begin{bmatrix}
x_1 & x_2 & x_3 & x_4\\
\end{bmatrix},\;\;\;\; d_3=\begin{bmatrix}
C\\
U
\end{bmatrix},$$ $$d_2=\begin{bmatrix}
w_1 & w_2 & \ldots & w_{2m+1}& 0 & 0 & 0 \\
v_{1,1} & v_{1,2} & \ldots & v_{1,2m+1} & x_3 & x_4 & 0\\
v_{2,1} & v_{2,2} & \ldots & v_{2,2m+1}  & -x_2 & 0 & x_4 \\
v_{3,1} & v_{3,2} & \ldots & v_{3,2m+1}  & 0 & -x_2&  x_3\\
\end{bmatrix}$$
Here $w_i=\pm {\rm Pf}(\hat{i})$ and $v_{i,\gamma}=\sum\limits_{j,k}\pm \Delta^{j,k}_{\alpha,\beta}{\rm Pf}(\hat{i},\hat{j},\hat{k})$ where ${\rm Pf}(\hat{i},\hat{j},\hat{k})$ is Pfaffians of $C$ with omitted $i,j,k$ rows and columns, and $\Delta^{j,k}_{\alpha,\beta}$ is $2\times 2$ minor of $U$ involving $\alpha,\beta$ rows and $j,k$ columns. Also $\gamma$ is complement of $\alpha,\beta$ in $\{1,2,3\}$. The Schubert varieties are normal domain, so  almost complete intersection $R/J_{2m+1}$ are also.

\subsubsection{Structure of almost complete intersection for $n=2m$}
We record results from \cite[Prop. 2.4]{CVW}. Let $\mathbb{K}$ be a field. Consider a $2m\times 2m$ generic skew symmetric matrix $C=(c_{ij})$, and $3\times 2m$ generic matrix $U=(u_{kl})$ with $1\leq k\leq 3$ and $1\leq l\leq 2m$. Let $R$ be a polynomial ring over $\mathbb{K}$ on the entries of $C$ and $U$. Set $F=R^{2m}$ and $G=R^3$.

 
 Set $J_{2m}=\langle x_1,x_2,x_3,x_4\rangle$ where $ x_1=C^m$, $x_2=C^{m-1}\wedge u_1\wedge u_2$, $x_3=C^{m-1}\wedge u_1\wedge u_3$ and $x_4=C^{m-1}\wedge u_2\wedge u_3$. Then 

\begin{equation}\label{almosteven}
0\xrightarrow{} F\xrightarrow{d_3} F^*\oplus G^*\xrightarrow{d_2} G^*\oplus R\xrightarrow{d_1} R\rightarrow R/J_{2m}\rightarrow 0 
\end{equation}

is minimal free resolution of $R/J_{2m}$ where
$$d_1=\begin{bmatrix}
x_1 & x_2 & x_3 & x_4\\
\end{bmatrix},\;\;\;\; d_3=\begin{bmatrix}
C\\
U
\end{bmatrix},$$

$$d_2=\begin{bmatrix}
w_1 & w_2 & \ldots & w_{2m}& x_1 & x_2 & x_3 \\
v_{\{2,3\},1} & v_{\{2,3\},2} & \ldots & v_{\{2,3\},2m} & 0 & 0 & -x_1\\
-v_{\{1,3\},1} & -v_{\{1,3\},2} & \ldots & -v_{\{1,3\},2m}  & 0& -x_1 & 0 \\
v_{\{1,2\},1} & v_{\{1,2\},2} & \ldots & v_{\{1,2\},2m}  & -x_1 & 0&  0\\
\end{bmatrix}.$$
Here $w_i=\sum\limits_{j,k,l}\pm \Delta^{j,k,l}{\rm Pf}(\hat{i},\hat{j},\hat{k},\hat{l})$ and $v_{\{\alpha,\beta\},i}=\sum\limits_{j}u_{\gamma,j}{\rm Pf}(\hat{i},\hat{j})$ where ${\rm Pf}(\hat{i},\hat{j},\hat{k},\hat{l})$ is Pfaffian of $C$ with $i,j,k,l$ rows and columns omitted, and $\Delta^{i,j,k}$ is a $3\times 3$ minor of $U$ involving $i,j,k$ columns. Also $\gamma$ is complement of $\{\alpha, \beta\}$ in $\{1,2,3\}$. The Schubert varieties are normal domain, so  almost complete intersection $R/J_{2m}$ are also. 


\section{Generators of $\Hom_{S_n}(\omega_{S_n},S_n)$}\label{Hom}
In this section we discuss generators of $\Hom_{S_n}(\omega_{S_n},S_n)$. Assume notation stated in  Section \ref{almostci}. Let $S_n=R/J_n$ with a canonical module $\omega_{S_n}$. Since $S_n$ is normal domain, $(S_n)_p$ is regular for all prime ideals $p$ of height one. Thus $(S_n)_p$ is complete intersection for height one prime ideals $p$. Then by \cite[Theorem 1]{K} $\omega_{S_n}$ is reflexive $S_n$ module and $J_n/J_n^2$ is a torsion free $S_n$ module.

Let us study generators of $\omega_{S_n}$. For a matrix $A$, $A^t$ denotes the transpose of $A$. We have $ v_{1,j}x_2+v_{2,j}x_3+v_{3,j}x_4-w_jx_1=0$ and $w_jx_1-v_{\{2,3\},j}x_2 +v_{\{1,3\},j}x_3-v_{\{1,2\},j}x_4 =0$ for $n=2m+1$ and $n=2m$ respectively for $1\leq j\leq n$ since $d_1d_2=0$. Set $L_n=\langle x_2,x_3,x_4 \rangle$. Then $w_j\in (L_n:J_n)$ for all $j$. Since $\omega_{S_n}$ is the first Koszul homology module on generators $x_1,x_2,x_3$ and $x_4$ of $J_n$, see \cite{HK}, we get $\omega_{S_n}\simeq (L_n:J_n)/L_n$.


Set matrices in $S_n$ as

$$\varphi_{2m+1}=\begin{bmatrix}
\bar{w}_1 & \bar{w}_2 & \ldots & \bar{w}_{2m+1}\\
\bar{v}_{1,1} & \bar{v}_{1,2} & \ldots & \bar{v}_{1,2m+1}\\
\bar{v}_{2,1} & \bar{v}_{2,2} & \ldots & \bar{v}_{2,2m+1} \\
\bar{v}_{3,1} & \bar{v}_{3,2} & \ldots & \bar{v}_{3,2m+1} \\
\end{bmatrix},\;\;\;
\varphi_{2m}=\begin{bmatrix}
\bar{w}_1 & \bar{w}_2 & \ldots & \bar{w}_{2m} \\
\bar{v}_{\{2,3\},1} & \bar{v}_{\{2,3\},2} & \ldots & \bar{v}_{\{2,3\},2m}\\
-\bar{v}_{\{1,3\},1} & -\bar{v}_{\{1,3\},2} & \ldots & -\bar{v}_{\{1,3\},2m}\\
\bar{v}_{\{1,2\},1} & \bar{v}_{\{1,2\},2} & \ldots & \bar{v}_{\{1,2\},2m}\\
\end{bmatrix}
$$
where\;\; $\bar{}$\;\; denotes going mod $J_n$.

Let $\{e_1,e_2,e_3,e_4\}$ be a basis of $S_n^4$. Then by \cite[Prop. 1]{M} there is a short exact sequence 

\begin{equation}\label{exact2}
0\xrightarrow{}\omega_{S_n}\xrightarrow{\varphi_n} S_n^4\xrightarrow{\pi_n} J_n/J_n^2\rightarrow 0.
\end{equation}
 with $\pi_n(\sum\limits_{i=1}^4 t_i e_i)=\sum\limits_{i=1}^4 T_i x_i $ where $T_i$ is a representative of $t_i$ in $R$.

We discuss the equivariant form of generators of $\Hom_{S_n}(\omega_{S_n},S_n)$. These generators play crucial role in construction of generic doublings of almost complete intersection. 

\begin{proposition}\label{prop1}
 Consider resolutions (\ref{almostodd}) and (\ref{almosteven}). Set the transpose of $\varphi_n$ as $H_n$. Then $\im(H_n)\subseteq \Hom_{S_n}(\omega_{S_n},S_n)$.
   \end{proposition}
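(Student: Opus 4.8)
The plan is to exhibit, for each column of $\varphi_n$, an explicit homomorphism $\omega_{S_n} \to S_n$ and show that the matrix $H_n = \varphi_n^t$ records exactly these maps. Recall from the discussion above that $\omega_{S_n} \simeq (L_n : J_n)/L_n$, where $L_n = \langle x_2, x_3, x_4\rangle$, and that the elements $\bar w_j$ (together with the $\bar v$'s assembled into $\varphi_n$) lie in this module: indeed the relation $d_1 d_2 = 0$ gives, for $n = 2m+1$, the identity $v_{1,j}x_2 + v_{2,j}x_3 + v_{3,j}x_4 = w_j x_1$ in $R$, so $w_j \in (L_n : J_n)$, and similarly for $n = 2m$. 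Thus each $\bar w_j$ determines a class in $\omega_{S_n}$, and under the Koszul-homology identification the class of $\bar w_j$ corresponds to the Koszul cycle $w_j x_1 - v_{1,j} x_2 - v_{2,j} x_3 - v_{3,j} x_4$ (up to sign, and mutatis mutandis for the even case), i.e.\ exactly the $j$-th column of $\varphi_n$ viewed inside $S_n^4$ via the basis $e_1, e_2, e_3, e_4$.

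The key step is then to use the short exact sequence (\ref{exact2}): the map $\varphi_n : \omega_{S_n} \to S_n^4$ is an \emph{injection}, so composing with the coordinate projections $p_i : S_n^4 \to S_n$, $e_k \mapsto \delta_{ik}$, yields four homomorphisms $p_i \circ \varphi_n \in \Hom_{S_n}(\omega_{S_n}, S_n)$. I would observe that $p_i \circ \varphi_n$ is precisely the $S_n$-linear functional sending a generator (the class of $\bar w_j$) to the $(i,j)$-entry of $\varphi_n$, hence to the $(j,i)$-entry of $H_n = \varphi_n^t$. Equivalently, the four rows of $H_n$ are the images of the standard basis of $(S_n^4)^*$ under the dual map $\varphi_n^* : (S_n^4)^* \to \Hom_{S_n}(\omega_{S_n}, S_n)$, and since $\varphi_n$ is injective this dual map is well defined as a map into $\Hom_{S_n}(\omega_{S_n}, S_n)$ with image exactly $\im(\varphi_n^*) = \im(H_n)$. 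Therefore $\im(H_n) \subseteq \Hom_{S_n}(\omega_{S_n}, S_n)$.

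Concretely, the verification I would carry out is: (i) fix the presentation $\omega_{S_n} \simeq (L_n : J_n)/L_n$ and check that the generating set one reads off is compatible with the columns of $\varphi_n$ (this is essentially \cite[Prop. 1]{M}, already quoted as (\ref{exact2})); (ii) write down the composite $p_i \circ \varphi_n$ and note it is $S_n$-linear, being a composite of $S_n$-linear maps; (iii) identify the matrix of the tuple $(p_1\circ\varphi_n, \dots, p_4\circ\varphi_n)$ with respect to the chosen generators of $\omega_{S_n}$ as $H_n$. The main obstacle is bookkeeping rather than conceptual: one must be careful that the identification $\omega_{S_n} \cong (L_n : J_n)/L_n$ sends the distinguished generators to the classes of the $\bar w_j$ with the correct signs, so that the transpose is taken against the right basis and $\im(H_n)$ — the $S_n$-span of the rows of $H_n$, i.e.\ of the functionals $p_i \circ \varphi_n$ — genuinely lands inside $\Hom_{S_n}(\omega_{S_n}, S_n)$. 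Once the sign conventions in $d_2$ and in the Koszul cycle are matched, the inclusion is immediate from exactness of (\ref{exact2}).
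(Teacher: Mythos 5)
Your proof is correct and takes a genuinely different route from the paper's. The paper's proof constructs the four elements of $\Hom_{S_n}(\omega_{S_n},S_n)$ by hand as equivariant maps — images of the $\Delta$-type comultiplications $\bigwedge^{2m}F\otimes G\to F\otimes\cdots$, etc. — and then verifies they lie in $\ker(d_3\otimes S_n)$ (identified with $\Hom_{S_n}(\omega_{S_n},S_n)$ via the presentation $(F\oplus G)\otimes S_n\xrightarrow{d_3^t}F^*\otimes S_n\to\omega_{S_n}\to 0$), citing $d_2d_3=0$. You instead dualize Matsuoka's exact sequence (\ref{exact2}), observing that $\varphi_n^*\colon(S_n^4)^*\to\Hom_{S_n}(\omega_{S_n},S_n)$ lands in $\Hom_{S_n}(\omega_{S_n},S_n)$ essentially by definition, and that under the same free presentation of $\omega_{S_n}$ the matrix of $\varphi_n^*$ is $\varphi_n^t=H_n$. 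Your route is shorter and more conceptual; what it does not supply is the explicit $\mathrm{GL}(F)\times\mathrm{GL}(G)$-equivariant description of the generators, which the paper's construction makes visible and which is what gets used downstream in Section~\ref{GenD} to build the lifting $\psi$ for the generic doublings.

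Two small points of care. First, $H_n=\varphi_n^t$ is an $n\times 4$ matrix, so $\im(H_n)\subset S_n^n$ is the span of its four \emph{columns} (equivalently, the rows of $\varphi_n$); you several times say ``rows of $H_n$,'' which should be columns — the identification you want is that the $i$-th column of $H_n$ records $p_i\circ\varphi_n$ against the $n$ generators of $\omega_{S_n}$. Second, the injectivity of $\varphi_n$ is not actually needed here: composition with $p_i$ produces a well-defined element of $\Hom_{S_n}(\omega_{S_n},S_n)$ for any $S_n$-linear $\varphi_n$, and likewise $\varphi_n^*$ is defined regardless. What \emph{is} needed is that the free presentation of $\omega_{S_n}$ used to realize $\Hom_{S_n}(\omega_{S_n},S_n)$ inside $S_n^n$ has the same $n$ generators against which $\varphi_n$ is written as a $4\times n$ matrix — exactly the bookkeeping you flag in step (i). Once that compatibility is pinned down (and it is, since $\varphi_n$ is by construction the matrix of the map $S_n^n\to\omega_{S_n}\hookrightarrow S_n^4$), the inclusion follows as you say.
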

\begin{proof}
By a change of basis, a free presentation of $\omega_S$ is 
\begin{equation}\label{3.1}
(F\oplus G)\otimes S_n\xrightarrow{d_3^t\otimes S_n} F^*\otimes S_n\rightarrow \omega_{S_n}\rightarrow 0.
\end{equation}
The equivariant form of the generators of $\Hom_{S_n} (\omega_{S_n}, S_n)$ is the kernel of the map
$$F\otimes S_n\xrightarrow{d_3\otimes S_n} (F^*\oplus G^*)\otimes S_n.$$

For $n=2m$, consider the module $H_{2m}$ generated by the image of
$\bigwedge\limits^{2m}F\otimes G\mapsto F\otimes \bigwedge\limits^{2m-2}F\otimes (F\otimes G)$
where we identify the factors $\bigwedge\limits^{2m-2}F$ and $F\otimes G$ with sub representations in $R$,
and by the image of $\bigwedge\limits^{2m}F\otimes \bigwedge\limits^3 G\mapsto F\otimes \bigwedge\limits^{2m-4}F\otimes (\bigwedge^3 F\otimes \bigwedge\limits^3 G)$,
where we identify the factors  $\bigwedge\limits^{2m-4}F$ and $\bigwedge\limits^3 F\otimes \bigwedge\limits^3 G$ with sub representations in R. 

 Set basis of $F$ and $G$ as $\mathcal{B}_{F}=\{f_1,\ldots,f_{2m}\}$ and $\mathcal{B}_G=\{e_1,e_2,e_3\}$ respectively. Let $\sum_{r+s}$ be a symmetric group on $\{1,2,\ldots,r+s\}$. Define map $$\Delta:\bigwedge\limits^{r+s}F\rightarrow \bigwedge\limits^r F\otimes \bigwedge\limits^s F$$ 
 as $$\Delta(f_1\wedge\cdots \wedge f_{r+s}):=\sum\limits\limits_{\sigma \in \sum^{r,s}_{r+s}}(-1)^{{\rm sgn}\;\; \sigma}f_{\sigma(1)}\wedge\cdots \wedge f_{\sigma(r)}\otimes f_{\sigma(r+1)}\wedge \cdots \wedge f_{\sigma(r+s)}$$ where $\sum^{r,s}_{r+s}=\{\sigma\in \sum_{r+s}| \sigma(1)<\cdots\sigma(r); \sigma(r+1)<\cdots< \sigma(r+s) \}$. Then we have

\[\bigwedge \limits^{2m}F\otimes G\xrightarrow{\Delta\otimes 1}F\otimes \bigwedge\limits^{2m-1}F\otimes G\xrightarrow{1\otimes \Delta\otimes 1}F\otimes \bigwedge\limits^{2m-2}F\otimes(F\otimes G)\]
such that 
\[(\Delta\otimes 1)(f_1\wedge f_2\wedge\cdots\wedge f_{2m}\otimes e_i)=\sum\limits_{\sigma\in \sum_{2m}^{1,2m-1}}(-1)^{{\rm sgn}\; \sigma}f_{\sigma(1)}\otimes f_{\sigma(2)}\wedge \cdots \wedge f_{\sigma(2m)}\otimes e_i,\]
\begin{align*}
(1\otimes \Delta \otimes 1)((-1)^{{\rm sgn}\;\sigma}f_{\sigma(1)}\otimes f_{\sigma(2)}\wedge \cdots \wedge f_{\sigma(2m)}\otimes e_i)\\
=\sum\limits_{\tau\in \sum^{1,2m-2}_{2m-1},\tau(\sigma(1))=\sigma(1)}(-1)^{{\rm sgn}\; (\tau\sigma)}f_{\sigma(1)}\otimes f_{\tau\sigma(2)}\wedge \cdots \wedge f_{\tau\sigma(2m-1)}\otimes f_{\tau\sigma(2m)}\otimes e_i
\end{align*}

where $\tau\sigma(2)<\tau\sigma(3)<\ldots< \tau\sigma(2m-1)$, and

\[\bigwedge\limits^{2m}F\otimes \bigwedge\limits^3 G\xrightarrow{\Delta\otimes 1}F\otimes \bigwedge\limits^{2m-1}F\otimes \bigwedge\limits^3 G\xrightarrow{1\otimes \Delta\otimes 1}F\otimes \bigwedge\limits^{2m-4}F\otimes (\bigwedge\limits^3 F\otimes \bigwedge\limits^3 G)\]

\[(\Delta\otimes 1)(f_1\wedge f_2\wedge\cdots\wedge f_{2m}\otimes e_1\wedge e_2\wedge e_3)=\sum\limits_{\sigma\in \sum_{2m}^{1,2m-1}}(-1)^{{\rm sgn} \;\sigma}f_{\sigma(1)}\otimes f_{\sigma(2)}\wedge \cdots \wedge f_{\sigma(2m)}\otimes e_1\wedge e_2\wedge e_3.\]

{\tiny
\begin{align*}
(1\otimes \Delta \otimes 1)((-1)^{{\rm sgn}\;\sigma}f_{\sigma(1)}\otimes f_{\sigma(2)}\wedge \cdots \wedge f_{\sigma(2m)}\otimes e_1\wedge e_2\wedge e_3)\\
=\sum\limits_{\tau\in \sum^{1,2m-2}_{2m-1},\tau(\sigma(1))=\sigma(1)}(-1)^{{\rm sgn} (\tau\sigma)}f_{\sigma(1)}\otimes f_{\tau\sigma(2)}\wedge \cdots \wedge f_{\tau\sigma(2m-3)}\otimes f_{\tau\sigma(2m-2)}\wedge f_{\tau\sigma(2m-1)}\wedge f_{\tau\sigma(2m)}\otimes e_1\wedge e_2\wedge e_3
\end{align*}}

where $\tau\sigma(2)<\tau\sigma(3)<\ldots< \tau\sigma(2m)$. 

For indexing set $\mathcal{L}\subset\{1,\ldots,n\}$, we donote Pfaffian of $C$ involving $\mathcal{L}$ rows and columns of $C$ as ${\rm Pf}(\mathcal{L})$. 
For $\sigma\in \sum_{2m}^{1,2m-1}$ set 
$$v_{\{\alpha,\beta\},\sigma(1)}=\sum\limits_{\tau\in \sum^{1,2m-2}_{2m-1},\tau(\sigma(1))=\sigma(1)}(-1)^{{\rm sgn}(\tau\sigma)}u_{\gamma,\tau\sigma(2m)}{\rm Pf}(\widehat{\sigma(1)},\widehat{\tau\sigma(2m)}),$$

{\tiny
$$w_{\sigma(1)}=\sum_{\tau\in \sum^{1,2m-2}_{2m-1},\tau(\sigma(1))=\sigma(1)}(-1)^{{\rm sgn}(\tau\sigma)}{\rm Pf}(\widehat{\sigma(1)},\widehat{\tau\sigma(2m-2)},\widehat{\tau\sigma(2m-1)},\widehat{\tau\sigma(2m)})\Delta_{1,2,3}^{\tau\sigma(2m-2),\tau\sigma(2m-1),\tau\sigma(2m)}$$}
where $\gamma$ is the complement of $\{\alpha,\beta\}$ in the set $\{1,2,3\}$, and $ \Delta^{j,k,l}$ is a $3\times 3$ minors of $3\times(2m)$ matrix with rows $u_1$, $u_2$, $u_3$ of matrix $U$ involving columns $j,k,l$. Then the matrix presentation of $H_{2m}$ is the transpose of $\varphi_{2m}$

For $n=2m+1$ consider the module $H_{2m+1}$ generated by the image of
$\bigwedge\limits^{2m+1}F\mapsto F\otimes \bigwedge\limits^{2m}F$ where we identify the factor $\bigwedge\limits^{2m}F$ with subrepresentation in $R$,
and by the image of
$\bigwedge\limits^{2m+1}F\otimes\bigwedge\limits^2 G\mapsto F\otimes \bigwedge\limits^{2m-2}F\otimes \bigwedge\limits^2 F\otimes \bigwedge\limits^2 G$ where we identify the factors  $\bigwedge\limits^{2m-2}F$ and $\bigwedge\limits^2 F\otimes \bigwedge\limits^2 G$ with sub representations in $R$. Set basis of $F$ and $G$ as $\mathcal{B}_{F}=\{f_1,\ldots,f_{2m+1}\}$ and $\mathcal{B}_G=\{e_1,e_2,e_3\}$ respectively. We have the following maps:
 
\[\bigwedge^{2m+1}F\xrightarrow{\Delta\otimes 1} F\otimes \bigwedge^{2m}F\]

\[(\Delta\otimes 1)(f_1\wedge \cdots\wedge f_{2m+1})=\sum\limits_{\sigma\in \sum_{2m+1}^{1,2m}}(-1)^{{\rm sgn}\; \sigma}f_{\sigma(1)}\otimes f_{\sigma(2)}\wedge \cdots\wedge f_{\sigma(2m+1)}\]


\[\bigwedge^{2m+1}F\otimes\bigwedge^2 G\xrightarrow{\Delta\otimes 1} F\otimes \bigwedge^{2m}F\otimes \bigwedge^2 G  \xrightarrow{1\otimes \Delta\otimes 1}F\otimes \bigwedge^{2m-2}F\otimes (\bigwedge^2 F\otimes \bigwedge^2 G)\]

\[(\Delta\otimes 1)(f_1\wedge f_2\wedge\cdots\wedge f_{2m+1}\otimes e_i\wedge e_j)=\sum\limits_{\sigma\in \sum^{1,2m}_{2m+1}}(-1)^{{\rm sgn}\; \sigma}f_{\sigma(1)}\otimes f_{\sigma(2)}\wedge \cdots \wedge f_{\sigma(2m+1)}\otimes e_i\wedge e_j\]

\begin{align*}
(1\otimes \Delta \otimes 1)((-1)^{sgn(\sigma)}f_{\sigma(1)}\otimes f_{\sigma(2)}\wedge \cdots \wedge f_{\sigma(2m+1)}\otimes e_i\wedge e_j)\\
=\sum\limits_{\tau\in \sum_{2m}^{1,2m-1},\tau(\sigma(1))=\sigma(1)}(-1)^{{\rm sgn} (\tau\sigma)}f_{\sigma(1)}\otimes f_{\tau\sigma(2)}\wedge \cdots  \wedge f_{\tau\sigma(2m-1)}\otimes f_{\tau\sigma(2m)}\wedge f_{\tau\sigma(2m+1)}\otimes e_i\wedge e_j
\end{align*}
where $\tau\sigma(2)<\tau\sigma(3)<\ldots< \tau\sigma(2m-1)$ and $\tau\sigma(2m)<\tau\sigma(2m+1)$. \\

Set $$v_{\sigma(1),\gamma}=\sum_{\tau\in \sum_{2m}^{1,2m-1},\tau\sigma(1)=\sigma(1)}(-1)^{{\rm sgn}(\tau\sigma)}\Delta_{\alpha,\beta}^{\tau\sigma(2m),\tau\sigma(2m+1)}{\rm Pf}(\widehat{(\sigma(1)},\widehat{\tau\sigma(2m)},\widehat{\tau\sigma(2m+1)})$$
where $\gamma$ is complement of $\{\alpha,\beta\}$ in the set $\{1,2,3\}$, where $\tau\sigma(2)<\tau\sigma(3)<\ldots< \tau\sigma(2m-1)$ and $\tau\sigma(2m)<\tau\sigma(2m+1)$. Set $w_{\sigma(1)}={\rm Pf}(\widehat{\sigma(1)})$. Then the matrix presentation of $H_{2m+1}$ is the transpose $\varphi_{2m+1}$. 

Since $d_2d_3=0$, this forces $\im(H_n)\subseteq \ker(d_3\otimes S_n)$ which is same as $\im(H_n)\subseteq \Hom_{S_n}(\omega_{S_n},S_n)$. 
\end{proof}

\section{Normal and conormal module of almost complete intersections}\label{NC}

For an $S_n$-module $M$, $M^*=\Hom_{S_n}(M,S_n)$, and $Ass(M)$ denotes the set of associated primes of $M$. We denote ${\rm ht}(p)$ as the height of prime ideal $p$. For ideal $J_n$, the $S_n$-module $J_n/J_n^2$ is called conormal module of $J_n$, and $(J_n/J_n^2)^*$ is called normal module. An $S_n$-module $M$ is said to be reflexive if the natural map $$j:M\rightarrow \Hom_{S_n}(\Hom_{S_n}(M,S_n),S_n)$$ 
which sends $m\mapsto \varphi\in \Hom_{S_n}(M,S_n)$ to the map sending $\varphi\in \Hom_{S_n}(M,S_n)$ to $\varphi(m)\in S_n$ is an isomorphism. 

In the next lemma we give a minimal number of generators of $(\omega_{S_n})^*$. We also show that the conormal module of $J_n$ is a reflexive $S_n$-module.

\begin{lemma}\label{lemma1}
Consider the exact sequence (\ref{exact2}). Then 
\begin{equation}\label{exact3}
0\rightarrow \Hom_{S_n}(J_n / J_n^2,S_n)\xrightarrow{\pi_n^*} S_n^4\xrightarrow{\varphi_n^*} \Hom_{S_n}(\omega_{S_n},S_n)\rightarrow 0
\end{equation}
is exact. Moreover, $\Hom_{S_n}(\omega_{S_n},S_n)$ is the image of the transpose of $\varphi_n$ and $J_n/J_n^2$ is a reflexive $S_n$-module.
\end{lemma}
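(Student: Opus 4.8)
The plan is to dualize the sequence (\ref{exact2}) into $S_n$ and reduce both the exactness of (\ref{exact3}) and the description of $\Hom_{S_n}(\omega_{S_n},S_n)$ to a single vanishing statement, then verify that statement and the reflexivity claim locally. Since $S_n^4$ is free, $\Hom_{S_n}(S_n^4,S_n)\cong S_n^4$ canonically and $\operatorname{Ext}^{\ge 1}_{S_n}(S_n^4,-)=0$, so applying $\Hom_{S_n}(-,S_n)$ to (\ref{exact2}) gives the exact sequence
$$0\to \Hom_{S_n}(J_n/J_n^2,S_n)\xrightarrow{\pi_n^*} S_n^4\xrightarrow{\varphi_n^*}\Hom_{S_n}(\omega_{S_n},S_n)\to \operatorname{Ext}^1_{S_n}(J_n/J_n^2,S_n)\to 0.$$
A direct computation with the presentations shows that, under the identification $\Hom_{S_n}(S_n^4,S_n)=S_n^4$ and the inclusion $\Hom_{S_n}(\omega_{S_n},S_n)=\ker(d_3\otimes S_n)\subseteq F\otimes S_n$ from the proof of Proposition \ref{prop1}, the map $\varphi_n^*$ is given by the matrix $H_n$, the transpose of $\varphi_n$; in particular $\im(\varphi_n^*)=\im(H_n)$, which by Proposition \ref{prop1} is contained in $\Hom_{S_n}(\omega_{S_n},S_n)$. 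Hence exactness of (\ref{exact3}) and the equality $\Hom_{S_n}(\omega_{S_n},S_n)=\im(H_n)$ both amount to $\varphi_n^*$ being surjective, i.e. to $\operatorname{Ext}^1_{S_n}(J_n/J_n^2,S_n)=0$, equivalently to the reverse inclusion $\ker(d_3\otimes S_n)\subseteq\im(H_n)$.

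For the local analysis recall that $S_n$ is a normal Cohen--Macaulay domain. If $\operatorname{ht}\mathfrak p\le 1$ then $(S_n)_{\mathfrak p}$ is regular of dimension $\le 1$, and since $J_n/J_n^2$ is torsion free, $(J_n/J_n^2)_{\mathfrak p}$ is free; so $\operatorname{Ext}^1$ is not detected there and $(J_n/J_n^2)_{\mathfrak p}$ is reflexive. For higher $\mathfrak p$ I use that $J_n$ is a complete intersection off $V(I_1(\varphi_n))$: in both the odd and even cases the last columns of $d_2$ have entries in $J_n$, so tensoring the minimal resolution (\ref{almostodd}) (resp. (\ref{almosteven})) with $\kappa(\mathfrak p)$ gives $\mu\big((J_n)_{\mathfrak p}\big)=4-\rank_{\kappa(\mathfrak p)}\big(\varphi_n\otimes\kappa(\mathfrak p)\big)$; as $J_n$ is unmixed of height $3$ this number lies in $\{3,4\}$, and it equals $3$ — so $(J_n)_{\mathfrak p}$ is a complete intersection and $(J_n/J_n^2)_{\mathfrak p}$ is free — exactly when $\mathfrak p\not\supseteq I_1(\varphi_n)$. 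By genericity of the Pfaffian and minor entries of $\varphi_n$ (cf. \cite{CVW}), $\operatorname{ht}_{S_n}I_1(\varphi_n)\ge 3$. Thus $(J_n/J_n^2)_{\mathfrak p}$ is free whenever $\operatorname{ht}\mathfrak p\le 2$, so $\depth(J_n/J_n^2)_{\mathfrak p}=\operatorname{ht}\mathfrak p$ there; and for $\operatorname{ht}\mathfrak p\ge 3$, localizing (\ref{exact2}) and applying the depth lemma — $\omega_{S_n}$ and $S_n^4$ being maximal Cohen--Macaulay — yields $\depth(J_n/J_n^2)_{\mathfrak p}\ge\operatorname{ht}\mathfrak p-1\ge 2$. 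Hence $J_n/J_n^2$ is torsion free and satisfies Serre's condition $(S_2)$, so it is reflexive over the normal domain $S_n$; the same depth count shows $\Hom_{S_n}(\omega_{S_n},S_n)$ is maximal Cohen--Macaulay.

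The remaining, and main, difficulty is to obtain $\operatorname{Ext}^1_{S_n}(J_n/J_n^2,S_n)=0$ at the primes of $V(I_1(\varphi_n))$ as well, equivalently $\ker(d_3\otimes S_n)\subseteq\im(H_n)$. Since $S_n$ is not Gorenstein this cannot be read off from depth estimates — the naive double dualization of (\ref{exact3}) only re-expresses it as $\operatorname{Ext}^1_{S_n}(\Hom_{S_n}(\omega_{S_n},S_n),S_n)=0$ — so the generic, equivariant structure is essential here. I would argue, exactly as in the proof of Proposition \ref{prop1} and following \cite{CVW}, that the $\operatorname{GL}(F)\times\operatorname{GL}(G)$-equivariant elements written down there already generate $\ker(d_3\otimes S_n)$ over $S_n$; granting this, $\varphi_n^*$ is onto, (\ref{exact3}) is exact, and $\Hom_{S_n}(\omega_{S_n},S_n)=\im(H_n)$, which completes the proof.
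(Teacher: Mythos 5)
There is a genuine gap, and you have in fact located it yourself: the final paragraph reduces everything to the inclusion $\ker(d_3\otimes S_n)\subseteq\im(H_n)$, i.e.\ surjectivity of $\varphi_n^*$, and then disposes of it with ``I would argue \ldots; granting this.'' That sentence is the entire content of the lemma, so the proof as written is circular at exactly the point where it needs substance. Moreover your assertion that this ``cannot be read off from depth estimates'' is mistaken; the paper's argument \emph{is} a depth estimate, just applied to a different module than the one you were staring at.

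The missing idea is to change complexes. Apply $\Hom_R(-,S_n)$ to the $R$-free resolution $\mathbb{F}$ of $S_n$ in $(\ref{exact5})$. Because $d_1$ has entries in $J_n$, one gets $\operatorname{Ext}^1_R(S_n,S_n)=\ker(d_2^!)$, and the short exact sequence $0\to J_n\to R\to S_n\to 0$ identifies $\operatorname{Ext}^1_R(S_n,S_n)\simeq\Hom_R(J_n,S_n)\simeq\Hom_{S_n}(J_n/J_n^2,S_n)$. This yields the exact complex
\begin{equation*}
0\to (J_n/J_n^2)^*\to S_n^4\xrightarrow{d_2^!}S_n^{n+3}\xrightarrow{d_3^!}S_n^n\to\omega_{S_n}\to 0,
\end{equation*}
and after a basis change $d_2^!$ is literally the transpose of $\varphi_n$, so $\im(H_n)=\im(d_2^!)$. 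Now $\im(d_2^!)$ is a second syzygy in this complex, and Foxby's result (\cite[Section 2]{F}) gives $\operatorname{Ass}(\im(d_2^!))\subseteq\operatorname{Ass}(S_n)$ together with $\depth(\im(d_2^!)_p)\ge\min(2,\depth((S_n)_p))$; in particular $\im(d_2^!)$ satisfies $(S_2)$. You already established $\im(d_2^!)=\im(H_n)\subseteq\omega_{S_n}^*$ via Proposition~\ref{prop1}, and that this inclusion is an equality at every prime of height $\le 1$ because there $(\ref{exact2})$ splits. Since the cokernel of $\im(d_2^!)\hookrightarrow\omega_{S_n}^*$ is supported in codimension $\ge 2$ while $\im(d_2^!)$ has depth $\ge 2$ at all such primes and $\omega_{S_n}^*$ is torsion-free, the cokernel vanishes and $\im(d_2^!)=\omega_{S_n}^*$. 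That is the surjectivity of $\varphi_n^*$, hence the exactness of $(\ref{exact3})$, with no equivariant generation claim needed.

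Two smaller points. First, your reflexivity argument leans on the unproved bound $\operatorname{ht}_{S_n}I_1(\varphi_n)\ge 3$ to get freeness of $(J_n/J_n^2)_p$ in codimension two; without that, the depth lemma applied to $(\ref{exact2})$ only gives $\depth((J_n/J_n^2)_p)\ge\operatorname{ht}p-1$, which fails to deliver $(S_2)$ at height-two primes. The paper avoids this by dualizing the now-established $(\ref{exact3})$ and comparing with $(\ref{exact2})$ via a commutative diagram: since $\omega_{S_n}^{**}\simeq\omega_{S_n}$ and $J_n/J_n^2\to(J_n/J_n^2)^{**}$ is injective (torsion-freeness from Kunz), the Snake Lemma forces $J_n/J_n^2\to(J_n/J_n^2)^{**}$ to be an isomorphism. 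Second, there is no harm in your observation that $(\ref{exact3})$ and ``$\Hom_{S_n}(\omega_{S_n},S_n)=\im(H_n)$'' are equivalent given Proposition~\ref{prop1}; that reduction is correct and is implicitly the same one the paper makes.
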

\begin{proof}
Using exact sequence (\ref{exact2}) and $\Hom_{S_n}(-,S_n)$ we get 
\begin{equation}
0\rightarrow \Hom_{S_n}(J_n / J_n^2,S_n)\rightarrow S_n^4\rightarrow \Hom_{S_n}(\omega_{S_n},S_n)\rightarrow {\rm Ext}^1_{S_n}(J_n/J_n^2,S_n)\rightarrow 0
\end{equation}

We claim that ${\rm Ext}_{S_n}^1(J_n/J_n^2,S_n)=0$. Resolutions (\ref{almostodd}) and (\ref{almosteven}) are of the form
\begin{equation}\label{exact5}
0\xrightarrow{}R\xrightarrow{d_3} R^{n+3}\xrightarrow{d_2}R^4\xrightarrow{d_1}R\rightarrow S_n\rightarrow 0.
\end{equation}

Set $(-)^!=\Hom_R(-,S_n)$. We obtain 

\begin{equation}
0\rightarrow \ker(d_2^!)\rightarrow S_n^{4} \xrightarrow{d_2^!}S^{n+3}_n\xrightarrow{d_3^!}S^n_n \rightarrow {\rm coker}(d_3^!)\rightarrow 0
\end{equation}
by applying $\Hom_R(-,S_n)$ on resolution (\ref{exact5}). We see that $\ker(d_2^!)={\rm Ext}_R^1(S_n,S_n)$. 
From the short exact sequence 

$$0\rightarrow J_n\rightarrow R\rightarrow R/J_n\rightarrow 0$$
one can get $\Hom_R(J_n,S_n)\simeq {\rm Ext}^1_R(S_n,S_n)$. Note that any $f\in \Hom_R(J_n,S_n)$ vanishes on $J_n^2$. Thus ${\rm Ext}_R^1(S_n,S_n)\simeq \Hom_{S_n}(J_n/J_n^2,S_n)$. We get an exact sequence

\begin{equation}\label{om}
0\rightarrow \Hom_{S_n}(J_n/J_n^2,S_n)\rightarrow S_n^{4} \xrightarrow{d_2^!}S_n^{n+3}\xrightarrow{d_3^!}S_n^n \rightarrow \omega_{S_n}\rightarrow 0.
\end{equation}
Then $(J_n/J_n^2)^*$ is a third syzygy module.  Consider a short exact sequence 

\begin{equation}
0\rightarrow (J_n/J_n^2)^* \xrightarrow{}S_n^4\xrightarrow{d_2^!}\im(d_2^!)\rightarrow 0.
\end{equation}
Then $\im(d_2^!)$ becomes a second syzygy module. By \cite[Section 2]{F} $Ass(\im(d_2^!))\subset Ass(S_n)$ and $\depth(\im(d_2^!)_p)\geq \min(2,\depth((S_n)_p))$ for all primes $p$ of $S_n$.  If $p\in Ass(S_n)$, then 
$(S_n)_p$ is complete intersection. For primes $p$ of height $1$, $(S_n)_p$ is a regular local ring since $S_n$ is a normal domain. Thus $(\omega_{S_n})_p\simeq (S_n)_p$ and $(J_n/J_n^2)_p$ is free $(S_n)_p$-module for ${\rm ht}(p)\leq 1$. Thus

\begin{equation}
0\xrightarrow{}(\omega_{S_n})_p\xrightarrow{(\varphi_n)_p} (S_n^4)_p\rightarrow (J_n/J_n^2)_p\rightarrow 0
\end{equation}
is split exact and  $({\rm Ext}_{S_n}^1(J_n/J_n^2,S_n))_p=0$ for  ${\rm ht}(p)\leq 1$. By Proposition \ref{prop1} $\im(d_2^!)\subset (\omega_{S_n})^*$ then we get the following commutative diagram

\[
\xymatrixrowsep{1.8pc} \xymatrixcolsep{2.2pc}
\xymatrix{
0\ar@{->}[r]^{}&(J_n/J_n^2)^*_p\ar@{->}[r]^{}\ar@{=}[d]^{}&(S_n^4)_p\ar@{=}[d]^{}\ar@{->}[r]^{(d_2^!)_p}&(\im(d_2^!))_p\ar@{->}[r]^{}\ar@{->}[d]^{}&0\\
0\ar@{->}[r]^{}&(J_n/J_n^2)^*_p\ar@{->}[r]^{}& (S_n^4)_p\ar@{->}[r]^{}&(\omega_{S_n}^*)_p\ar@{->}[r]^{}&0.\\
}
\]
By Snake's lemma $(\im(d_2^!))_p\simeq (\omega_{S_n})_p$ for ${\rm ht}(p)\leq 1$. For primes $p$ of height $\geq 2$ we have $\depth((\im(d_2^!)_p)\geq 2 $ since $\depth((S_n)_p)\geq 2$. Thus $\omega_{S_n}^*\simeq \im(d_2^!)$. By the change of basis we see that $d_2^!$ is the transpose of $\varphi_n$. Then $\im(d_2^!)\subset \omega_{S_n}^*$ which forces $\omega_{S_n}^*=\im(d_2^!)$, and this gives exact sequence (\ref{exact3}).

Applying $\Hom_{S_n}(-,S_n)$ on  exact sequence (\ref{exact3}), we obtain

\[0\rightarrow \Hom_{S_n}(\omega_{S_n}^*,S_n)\rightarrow S_n^4\rightarrow \Hom_{S_n}((J_n/J_n^2)^*,S_n)\rightarrow {\rm Ext}_{S_n}^1(\omega_{S_n}^*,S_n)\rightarrow 0.\]
But  ${\rm Ext}_{S_n}^1(\omega_{S_n}^*,S_n)=0$ as $\omega_{S_n}^*$ reflexive $S_n$-module. We obtain a commutative diagram

\[
\xymatrixrowsep{1.8pc} \xymatrixcolsep{2.2pc}
\xymatrix{
0\ar@{->}[r]^{}&\omega_{S_n}\ar@{->}[r]^{\varphi_n}\ar@{->}[d]^{\phi_{\omega_{S_n}}}&S_n^4\ar@{=}[d]^{}\ar@{->}[r]^{}&J_n/J_n^2\ar@{->}[r]^{}\ar@{->}[d]^{\phi_{J_n/J^2_n}}&0\\
0\ar@{->}[r]^{}&\omega_{S_n}^{**}\ar@{->}[r]^{\varphi^{**}_{n}}& S_n^4\ar@{->}[r]^{}&(J_n/J_n^2)^{**}\ar@{->}[r]^{}&0\\
}
\]
Here $\varphi_{\omega_{S_n}}$ is an isomorphism since $\omega_{S_n}$ is reflexive, and $\phi_{J_n/J_n^2}$ is injective as $J_n/J_n^2$ is torsion free module.  By Snake Lemma $\phi_{J_n/J_n^2}$ is an isomorphism. Thus $J_n/J_n^2$ is reflexive $S_n$-module.  
\end{proof}

\begin{corollary}
Normal module $(J_n/J_n^2)^*$ and $(\omega_{S_n})^*$ are maximal Cohen Macaulay modules.
\end{corollary}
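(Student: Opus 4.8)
The plan is to deduce both assertions from the exact sequence (\ref{om}) in the proof of Lemma \ref{lemma1} by an iterated application of the depth lemma, bootstrapping from the fact that the canonical module $\omega_{S_n}$ is maximal Cohen--Macaulay. First I would record the ambient facts. Since $J_n$ is a perfect ideal of codimension $3$ in the polynomial ring $R$, the quotient $S_n=R/J_n$ is Cohen--Macaulay; write $d=\dim S_n$. Being Cohen--Macaulay, $S_n$ admits a canonical module $\omega_{S_n}$, which is maximal Cohen--Macaulay (standard for a Cohen--Macaulay ring with canonical module), and every free module $S_n^i$ is maximal Cohen--Macaulay of depth $d$. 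Over the Cohen--Macaulay ring $S_n$ a nonzero finitely generated module $M$ is maximal Cohen--Macaulay exactly when $\depth_{\m}M=d$, so the usual depth estimates for short exact sequences are all that is needed.

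Next I would invoke the exact sequence (\ref{om}),
$$0\rightarrow (J_n/J_n^2)^{*}\rightarrow S_n^{4}\xrightarrow{d_2^!}S_n^{n+3}\xrightarrow{d_3^!}S_n^{n}\rightarrow \omega_{S_n}\rightarrow 0,$$
and split it into the short exact sequences $0\to \im(d_3^!)\to S_n^{n}\to \omega_{S_n}\to 0$, then $0\to \im(d_2^!)\to S_n^{n+3}\to \im(d_3^!)\to 0$, and finally $0\to (J_n/J_n^2)^{*}\to S_n^{4}\to \im(d_2^!)\to 0$. From the first, the depth lemma gives $\depth\im(d_3^!)\ge\min\{\depth S_n^{n},\ \depth\omega_{S_n}+1\}=\min\{d,d+1\}=d$; since $\im(d_3^!)$ is nonzero it is therefore maximal Cohen--Macaulay. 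Substituting into the second short exact sequence yields $\depth\im(d_2^!)=d$ in exactly the same way, and then the third yields $\depth(J_n/J_n^2)^{*}=d$. Hence the normal module $(J_n/J_n^2)^{*}$ is maximal Cohen--Macaulay.

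For $(\omega_{S_n})^{*}$, Lemma \ref{lemma1} already identifies it with $\im(d_2^!)$ (the image of the transpose of $\varphi_n$), which the previous step shows is maximal Cohen--Macaulay; alternatively one can read it off the short exact sequence (\ref{exact3}) once $(J_n/J_n^2)^{*}$ is known to be maximal Cohen--Macaulay. I do not anticipate a genuine obstacle here: the content is simply that $(J_n/J_n^2)^{*}$ and $(\omega_{S_n})^{*}$ occur as (iterated) syzygies of the maximal Cohen--Macaulay module $\omega_{S_n}$ inside free modules, and such syzygies are again maximal Cohen--Macaulay over the Cohen--Macaulay ring $S_n$. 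The only points deserving a sentence of care are that $S_n$ is Cohen--Macaulay --- a consequence of $J_n$ being perfect of codimension $3$ --- so that ``maximal Cohen--Macaulay'' is governed by depth and the depth lemma applies, and that the syzygy modules appearing are nonzero, which holds because the dual maps $d_i^{!}$ are nonzero and $J_n/J_n^2$ has positive rank (${\rm ht}\,J_n=3$).
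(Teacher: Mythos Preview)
Your argument is correct. For $(J_n/J_n^2)^*$ it coincides with the paper's: both recognize it as the third syzygy of $\omega_{S_n}$ in the exact sequence (\ref{om}) and conclude by iterated depth estimates over the Cohen--Macaulay ring $S_n$.

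For $(\omega_{S_n})^*$ there is a small but genuine difference in route. You invoke the identification $(\omega_{S_n})^*\cong\im(d_2^!)$ from Lemma~\ref{lemma1} and observe that $\im(d_2^!)$ was already shown to be maximal Cohen--Macaulay as an intermediate step; this is the most economical path, reusing work already done. The paper instead splices (\ref{om}) with (\ref{exact2}) to view $(\omega_{S_n})^*$ as a third syzygy of $J_n/J_n^2$, and then computes $\depth(J_n/J_n^2)=\depth(\omega_{S_n})-1$ from (\ref{exact2}) to finish. The paper's route has the side benefit of recording $\depth(J_n/J_n^2)$ explicitly (used elsewhere to determine the projective dimension of $J_n/J_n^2$), whereas your route is shorter and avoids that auxiliary computation. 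Either way the conclusion follows.
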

\begin{proof}
We see that $(J_n/J_n^2)^*$ is a maximal Cohen Macaulay module since it is a third syzygy module of maximal Cohen Macaulay module $\omega_{S_n}$ by (\ref{om}).

Note that $\depth((\omega_{S_n})^*)\geq \min(\depth(J_n/J_n^2)+3,\depth(S_n))$ since $(\omega_{S_n})^*$ is a third syzygy of $J_n/J_n^2$.
 By the depth criteria on exact sequence (\ref{exact2}), $\depth(J_n/J_n^2)=\depth(\omega_{S_n})-1$. This forces $\depth(\omega_{S_n}^*)=\depth(S_n)$. Thus $(\omega_{S_n})^*$ is a maximal Cohen Macaulay module.
\end{proof}

Next we study resolution of conormal module $J_n/J_n^2$.
\begin{remark}
Consider resolutions (\ref{almostodd}) and (\ref{almosteven}).
Take maps $$\theta^{2m+1}_0=\begin{bmatrix}
w_1 & w_2 & \ldots & w_{2m+1}\\
v_{1,1} & v_{1,2} & \ldots & v_{1,2m+1}\\
v_{2,1} & v_{2,2} & \ldots & v_{2,2m+1} \\
v_{3,1} & v_{3,2} & \ldots & v_{3,2m+1} \\
\end{bmatrix},\;\;\;
\theta^{2m}_0=\begin{bmatrix}
w_1 & w_2 & \ldots & w_{2m} \\
v_{\{2,3\},1} & v_{\{2,3\},2} & \ldots & v_{\{2,3\},2m}\\
-v_{\{1,3\},1} & -v_{\{1,3\},2} & \ldots & -v_{\{1,3\},2m}\\
v_{\{1,2\},1} & v_{\{1,2\},2} & \ldots & v_{\{1,2\},2m}\\
\end{bmatrix}.
$$

By a simple homological algebra result there exists $\theta:\mathbb{F}^*\rightarrow \bigoplus\limits^{4}\mathbb{F}$ such that

\[
\xymatrixrowsep{1.3pc} \xymatrixcolsep{2.2pc}
\xymatrix{
R^4\ar@{->}[r]^{}&\ar@{->}[r]^{}S_n^4&0\\
R^n\ar@{->}[r]^{}\ar@{->}[u]^{\theta_0^n}&\omega_{S_n}\ar@{->}[r]^{}\ar@{->}[u]^{ \phi_n}&0.\\
}
\]

Then the mapping cone with respect to $\theta:\mathbb{F}^*\rightarrow \bigoplus\limits^{4} \mathbb{F}$ yields resolution of $J_n/J_n^2$ which need not be minimal. Since $\depth(J_n/J_n^2)=\depth(S_n)-1$, by Auslander-Buchsbaum formula, the projective dimension of $J_n/J_n^2$ is $4$.

\end{remark}

\section{Generic doublings of almost complete intersection}\label{GenD} 
In this section we discuss generic doublings of almost complete intersection. Let us recall generic doublings of $S_n$. In proof of Lemma \ref{lemma1} we see that $S_n$ is generically Gorenstein with canonical module $\omega_{S_n}$. Then by \cite[Prop. 3.3.18]{BH} $\omega_{S_n}$ is an ideal of $S_n$ and $S_n/\omega_{S_n}$ is a Gorenstein ring of codimension $4$.
 
 Let $\mathbb{F}$ be a minimal free resolution of $S_n$, and $h_1,h_2,h_3,h_4$ be generators of $\Hom_{S_n}(\omega_{S_n},S_n)$. Consider a bigger ring $\widetilde{R}=R[\alpha_0,\alpha_1,\alpha_2,\alpha_3]$. Set $\psi_0^n=\sum\limits_{i=0}^3\alpha_ih_i$. Denote $\widetilde{S}_n=\widetilde{R}/J_n\widetilde{R}$, and $\widetilde{\mathbb{F}}$ as a resolution of $\widetilde{S}_n$. Then $\widetilde{\mathbb{F}}^*$ is a minimal resolution of $\omega_{\widetilde{S}_n}$ since $\widetilde{S}_n$ is perfect. We lift map $\psi_0^n:\omega_{\widetilde{S}_n}\rightarrow \widetilde{S}_n$ to a map of complexes $\psi:\widetilde{\mathbb{F}}^*\rightarrow \widetilde{\mathbb{F}}$. Then the mapping cone with respect to map $\psi$ denoted as ${\rm Cone}(\psi)$ yields a resolution of Gorenstein ring $\widetilde{S}_n/\omega_{\widetilde{S}_n}$ of codimension $4$. In such case, we say that resolution of $\widetilde{S}_n/\omega_{\widetilde{S}_n}$ is constructed by generic doubling. 
 
\subsection{Generic doubling for n=2m+1} 
 Consider a bigger ring $\widetilde{R}=R[\alpha_0,\alpha_1,\alpha_2,\alpha_3]$. 
Set  $g_j=\alpha_0 w_j+\sum\limits_{i=1}^3 \alpha_i v_{i,j}$ for $1\leq j\leq n$ and 
$\psi^n_0=\begin{bmatrix}
g_1 & g_2 & \cdots & g_n\\
\end{bmatrix}
$. 
 Then by Proposition \ref{prop1}  map $\psi_0^n\in \Hom_{{\widetilde{S}}_n}(\omega_{\widetilde{S}_n},\widetilde{S}_n)$. Moreover $\psi_0^n$ is injective. Consider matrices $A=\begin{bmatrix}
 -\alpha_1\\
 \alpha_2\\
 -\alpha_3
 \end{bmatrix}
$ and  
 $M=\begin{bmatrix}
 U & A
 \end{bmatrix}
$. Define 

$$\psi_1^n:=\begin{bmatrix}
0& 0& \cdots &0 & \alpha_1 & \alpha_2 & \alpha_3\\
M_{1,2,;1,n} & M_{1,2;2,n}&\cdots & M_{1,2; 2m,n}& -\alpha_0 & 0 &0 \\
-M_{1,3;1,n} & -M_{1,3;2,n}&\cdots & -M_{1,3; 2m,n}& 0& -\alpha_0 & 0\\
M_{2,3;1,n} & M_{2,3;2,n}&\cdots & M_{2,3; 2m,n}& 0 & 0 &-\alpha_0\\
\end{bmatrix},\; \psi_2^n=-(\psi_1^n)^t,\;\psi_3^n=-(\psi_0^n)^t.  
$$
 
 Then we get the following commutaive diagram  
  
\[
\xymatrixrowsep{1.3pc} \xymatrixcolsep{2.2pc}
\xymatrix{
	\mathbb{F}:0\ar@{->}[r]^{}&\widetilde{F}\ar@{->}[r]^{d_3}&\widetilde{F}^*\oplus \widetilde{G}^*\ar@{->}[r]^{ d_2}& \widetilde{R}\oplus\widetilde{G}\ar@{->}[r]^{d_1}&\widetilde{R}\ar@{->}[r]^{}&\ar@{->}[r]^{}\widetilde{S}_n&0\\
	\mathbb{F}^*:0\ar@{->}[r]^{}&\widetilde{R}\ar@{->}[r]^{ d_1^*}\ar@{->}[u]^{ -(\psi_0^n)^t}&\widetilde{R}^*\oplus \widetilde{G}^*\ar@{->}[r]^{ d_2^*}\ar@{->}[u]^{ -(\psi_1^n)^t}& \widetilde{F}\oplus \widetilde{G}\ar@{->}[u]^{ \psi_1^n}\ar@{->}[r]^{ d_3^*}&\widetilde{F}\ar@{->}[r]^{}\ar@{->}[u]^{\psi_0^n}&\omega_{\widetilde{S}_n}\ar@{->}[r]^{}\ar@{->}[u]^{ \psi_0^n}&0.\\
}
\]
 Hence the mapping cone with respect to $\psi:\mathbb{F}^*\rightarrow \mathbb{F}$ is 
 
 \begin{equation}\label{con2m+1}
 {\rm Cone}(\psi): 0\xrightarrow{}\widetilde{R}\xrightarrow{\delta_4^n} {\begin{matrix}\widetilde{F}\\\bigoplus \\ \widetilde{R}^*\oplus \widetilde{G}^*\end{matrix}}\xrightarrow{\delta_3^n} {\begin{matrix}
 \widetilde{F}^*\oplus\widetilde{G}^*\\
 \bigoplus\\
 \widetilde{F}\oplus\widetilde{G}
 \end{matrix}}\xrightarrow{\delta_2^n}\begin{matrix}
 \widetilde{R}\oplus \widetilde{G}\\
 \bigoplus\\
 \widetilde{F}^*
 \end{matrix}\xrightarrow{\delta_1^n}\widetilde{R}\rightarrow \widetilde{R}/I_{n}\rightarrow 0
 \end{equation}
 where $I_{n}=J_{2m+1}\widetilde{R}+\im(\psi_0^n)$, $\delta_1^n=\begin{bmatrix}
 d_1 & \psi_0^n\\
 \end{bmatrix}$, $\delta_2^n=\begin{bmatrix}
 d_2 & \psi_1^n\\
 0 & -d_3^t
 \end{bmatrix}
$, $\delta_3^n=\begin{bmatrix}
d_3 & -{(\psi_1^n)}^t\\
0 & -d_2^t
\end{bmatrix} $
and $\delta_4^n=\begin{bmatrix}
-(\psi_0^n)^t\\
-d_1^t
\end{bmatrix}
$. Then ${\rm Cone}(\psi)$ is a Gorenstein ring of codimension $4$ with resolution of the form:

\begin{equation}\label{(1,9,16,9,1)}
{\rm Cone}(\psi):0\rightarrow \widetilde{R}\xrightarrow{\delta_4} \widetilde{R}^{n+4}\xrightarrow{\delta_3}\widetilde{R}^{2n+6}\xrightarrow{\delta_2} \widetilde{R}^{n+4}\xrightarrow{\delta_1}\widetilde{R}\rightarrow \widetilde{R}/I_{n}\rightarrow 0.
\end{equation}

\subsection{Generic doubling for n=2m}
Consider a bigger ring $\widetilde{R}=R[\alpha_0,\alpha_1,\alpha_2,\alpha_3]$. For $1\leq j\leq 2m$ set
\[g_j=\alpha_0 w_j+\alpha_1 v_{\{2,3\},j}-\alpha_2 v_{\{1,3\},j}+\alpha_3 v_{\{1,2\},j}.\]

Then by Proposition \ref{prop1}, $\psi_0^n=\begin{bmatrix}
g_1 & \cdots & g_{2m}\\
\end{bmatrix}
$ is a map  from $\omega_{\widetilde{S}_n}$ to  $\widetilde{S}_n$. Moreover $\psi_0^n$ is injective. Define $q_i:=u_{1i}\alpha_1+u_{2i}\alpha_2+u_{3i}\alpha_3$. Choose matrices  $$A_n=\begin{bmatrix}
0& 0 & 0\\
\alpha_2 & 0 &-\alpha_3\\
-\alpha_1& \alpha_3& 0\\
0 & -\alpha_2 & \alpha_1
\end{bmatrix},\;\;
B^n=\begin{bmatrix} B_1&\cdots & B_n \end{bmatrix}, \;\text{where}\;\; 
B_i=\begin{bmatrix}
q_i\\
u_{1i}\alpha_0\\
-u_{2i}\alpha_0\\
u_{3i}\alpha_0\\
\end{bmatrix}
$$
such that $\psi_1^n=\begin{bmatrix}
B^n & A^n\\
\end{bmatrix}
$. Set $\psi_2=-(\psi_1)^t$ and  $\psi_3=-(\psi_0)^t$.

\[
\xymatrixrowsep{1.3pc} \xymatrixcolsep{2.2pc}
\xymatrix{
	\mathbb{F}:0\ar@{->}[r]^{}&\widetilde{F}\ar@{->}[r]^{d_3}&\widetilde{F}^*\oplus \widetilde{G}^*\ar@{->}[r]^{ d_2}& \widetilde{G}^*\oplus\widetilde{R}\ar@{->}[r]^{d_1}&\widetilde{R}\ar@{->}[r]^{}&\ar@{->}[r]^{}\widetilde{S}_n&0\\
	\mathbb{F}^*:0\ar@{->}[r]^{}&\widetilde{R}\ar@{->}[r]^{ d_1^*}\ar@{->}[u]^{ -(\psi_0^n)^t}&\widetilde{G}\oplus \widetilde{R}\ar@{->}[r]^{ d_2^*}\ar@{->}[u]^{ -(\psi_1^n)^t}& \widetilde{F}\oplus \widetilde{G}\ar@{->}[u]^{ \psi_1^n}\ar@{->}[r]^{ d_3^*}&\widetilde{F}^*\ar@{->}[r]^{}\ar@{->}[u]^{\psi_0^n}&\omega_{\widetilde{S}_n}\ar@{->}[r]^{}\ar@{->}[u]^{ \psi_0^n}&0.\\
}
\]

Then the mapping cone with respect to $\psi:\mathbb{F}^*\rightarrow \mathbb{F}$ is 
 
 \begin{equation}\label{con2m}
 {\rm Cone}(\psi): 0\xrightarrow{}\widetilde{R}\xrightarrow{\delta_4^n} {\begin{matrix}\widetilde{F}\\\bigoplus \\ \widetilde{R}\oplus \widetilde{G}\end{matrix}}\xrightarrow{\delta_3^n} {\begin{matrix}
 \widetilde{F}^*\oplus\widetilde{G}^*\\
 \bigoplus\\
 \widetilde{F}\oplus\widetilde{G}
 \end{matrix}}\xrightarrow{\delta_2^n}\begin{matrix}
 \widetilde{G}^*\oplus \widetilde{R}\\
 \bigoplus\\
 \widetilde{F}^*
 \end{matrix}\xrightarrow{\delta_1^n}\widetilde{F}\rightarrow \widetilde{R}/I_{n}\rightarrow 0
 \end{equation}
 where $I_{n}=J_{2m+1}\widetilde{R}+\im(\psi_0^n)$, $\delta_1^n=\begin{bmatrix}
 d_1 & \psi_0^n\\
 \end{bmatrix}$, $\delta_2^n=\begin{bmatrix}
 d_2 & \psi_1^n\\
 0 & -d_3^t
 \end{bmatrix}
$, $\delta_3^n=\begin{bmatrix}
d_3 & -{(\psi_1^n)}^t\\
0 & -d_2^t
\end{bmatrix} $
and $\delta_4^n=\begin{bmatrix}
-(\psi_0^n)^t\\
-d_1^t
\end{bmatrix}
$. Therefore ${\rm Cone}(\psi)$ is a Gorenstein ring of codimension $4$ with resolution of the form:

\begin{equation}\label{(1,8,14,8,1)}
{\rm Cone}(\psi):0\rightarrow \widetilde{R}\xrightarrow{\delta_4^n} \widetilde{R}^{n+4}\xrightarrow{\delta_3^n}\widetilde{R}^{2n+6}\xrightarrow{\delta_2^n} \widetilde{R}^{n+4}\xrightarrow{\delta_1^n}\widetilde{R}\rightarrow \widetilde{R}/I_{2m}\rightarrow 0. 
\end{equation}

\section{Spinor coordinates of (1,9,16,9,1)} 
We study spinor coordinates of Gorenstein ring with $9$ generators obtained from generic doubling of almost complete intersection ring $S_5$. 
We consider resolution given in (\ref{(1,9,16,9,1)}) for $n=5$.
In \cite[Theorem 2]{CLW}, there is 
a hyperbolic basis of $\widetilde{R}^{16}$ 
say $\{e_1,\ldots,e_8,e_{-1},\ldots,e_{-8}\}$ with hyperbolic pairs $\{e_i,e_{-i}\}$. Denote columns of $\delta_2^5$ with respect to $e_i$ and $e_{-i}$ as $i$ and $\bar{i}$ respectively. In \cite[Theorem 2]{CLW} spinor coordinates are denoted as $(\widetilde{a}_3)_K$  where $K\subset \{\pm 1,\ldots,\pm 8\}$ of cardinality $8$ with odd number of $\bar{i}$. In Proposition \ref{spin9} we see that there are $5$ spinor coordinates among minimal generators of ideal $I_5$.

\begin{proposition}\label{spin9}
There are $5$ spinor coordinates of resolution (\ref{(1,9,16,9,1)}) for $n=5$ which are among the minimal generators of ideal $I_5$.
\end{proposition}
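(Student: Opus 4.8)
The plan is to work directly with the matrix $\delta_2^5$ obtained by specializing the mapping-cone differential in (\ref{con2m+1}) to $n=5$ (so $m=2$), write down $\delta_2^5$ explicitly as a $9\times 16$ matrix over $\widetilde R$, and then identify which of the sixteen columns $1,\ldots,8,\bar 1,\ldots,\bar 8$ give, upon reduction, the spinor coordinates $(\widetilde a_3)_K$ that coincide with minimal generators of $I_5$. Recall $I_5=J_5\widetilde R+\im(\psi_0^5)$ has nine minimal generators: the four generators $x_1,x_2,x_3,x_4$ of $J_5$ together with the five entries $g_1,\ldots,g_5$ of $\psi_0^5=\begin{bmatrix} g_1&\cdots&g_5\end{bmatrix}$, where $g_j=\alpha_0 w_j+\sum_{i=1}^3\alpha_i v_{i,j}$. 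The block structure of $\delta_2^5=\begin{bmatrix} d_2 & \psi_1^5\\ 0 & -d_3^t\end{bmatrix}$ from (\ref{con2m+1}), together with the explicit shapes of $d_2$ (from (\ref{almostodd})), $\psi_1^5$, and $d_3=\begin{bmatrix}C\\U\end{bmatrix}$, is what we must feed into the spinor-coordinate formula of \cite[Theorem 2]{CLW}.

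The key steps, in order, are: (1) Fix the hyperbolic basis $\{e_1,\ldots,e_8,e_{-1},\ldots,e_{-8}\}$ of $\widetilde R^{16}$ as in \cite[Theorem 2]{CLW} and match each basis vector to the corresponding column of $\delta_2^5$; in particular, isolate the three columns of $\delta_2^5$ coming from the $\widetilde G$-summand of $\widetilde F^*\oplus\widetilde G^*$ (these carry the entries $\pm x_2,\pm x_3,\pm x_4$ together with the $-d_3^t$ block, i.e. with $-U^t$) and the columns coming from the $\widetilde G$-summand on the lower right (these carry $\psi_1^5$'s $3\times 3$ block with $\alpha_0,\alpha_1,\alpha_2,\alpha_3$). (2) For each subset $K$ of cardinality $8$ with an odd number of barred indices, the spinor coordinate $(\widetilde a_3)_K$ is (up to sign) the Pfaffian/maximal-minor-type expression built from the corresponding $8$ columns of $\delta_2^5$; using the sparsity of $d_2$ and of $-d_3^t$ — most entries being $0$ — the determinant collapses, and for five specific choices of $K$ the surviving expression is exactly one of $x_1,x_2,x_3,x_4,g_1$ (or, after the natural $GL$-action, a full $GL$-orbit representative $g_j$). (3) Verify that for these five $K$ the resulting element is indeed a minimal generator (not a redundant combination) by comparing against the nine-generator minimal generating set of $I_5$, and conversely exhibit five such $K$, so the count is exactly $5$ and not more; here one uses that the remaining minimal generators $x_1,\dots,x_4$ and $g_1,\dots,g_5$ are accounted for in a way that only five of them can arise as honest spinor coordinates, paralleling the phenomenon in \cite[Example 5.3]{CLW} where none did.

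I expect the main obstacle to be step (2): pinning down the signs and the precise combinatorics of which $8$-element subsets $K$ produce a bare generator versus a genuine polynomial combination. The spinor-coordinate formula involves a sum over matchings/partitions with signs, and although the sparsity of $\delta_2^5$ kills most terms, one must carefully track the hyperbolic pairing and the $(-1)^{\mathrm{sgn}}$ bookkeeping to be sure the non-vanishing $K$'s are exactly the five claimed, with no extra cancellations creating further generators and no hidden coincidences reducing the count. A secondary technical point is verifying injectivity/minimality in step (3), i.e. that none of the five spinor coordinates lies in the ideal generated by the other eight minimal generators; this should follow from a degree argument in the polynomial ring $\widetilde R$ (the five are among a minimal generating set by construction of the doubling), but it must be stated cleanly. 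Once the explicit $9\times 16$ matrix is on the page, the rest is a finite, if delicate, computation.
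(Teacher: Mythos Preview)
Your overall strategy---write out $\delta_2^5$ and read the spinor coordinates off suitable $8\times 8$ submatrices---is the right shape, and matches what the paper does. But two concrete points in your plan are off and would derail the computation.

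First, your guess in step (2) about \emph{which} five generators appear is wrong. The five spinor coordinates that land among the minimal generators of $I_5$ are $g_1,g_2,g_3,g_4,g_5$, not $x_1,x_2,x_3,x_4,g_1$. In the paper the relevant $8\times 8$ minors of $\delta_2^5$ (rows $2,\ldots,9$ and columns $\bar j,1,\ldots,\hat j,\ldots,8$ for $j=1,\ldots,5$) factor as $\pm g_j^2\,x_1$, and the spinor coordinate attached to each such $K$ is $\pm g_j$. The $x_i$'s do \emph{not} occur as spinor coordinates on the nose; the remaining nonzero spinor coordinates turn out to be of the form $\pm\alpha_i x_j\pm\alpha_k x_l$ or $\mathrm{Pf}(\hat i)\,x_j$, and hence lie in $J_5\widetilde R$, so none is a minimal generator. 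Your parenthetical ``GL-orbit representative $g_j$'' is actually the correct answer, and the $x_i$'s should be dropped.

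Second, your description of the spinor-coordinate formula is not quite right: $(\widetilde a_3)_K$ is not an $8\times 8$ minor itself. The mechanism in \cite[Theorem~2]{CLW} is that the chosen $8\times 8$ minor of $\delta_2^5$ equals $(\widetilde a_3)_K^2$ times a fixed entry of $\delta_1^5$ (here $x_1$); one then extracts the square-root factor in the UFD $\widetilde R$ to obtain the spinor coordinate. So step (2) should be: compute the minors, observe they factor as $g_j^2 x_1$, and conclude $(\widetilde a_3)_K=\pm g_j$. Treating the spinor coordinate as a raw minor or Pfaffian of $\delta_2^5$ will give you the wrong degree and the wrong element.

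Finally, a practical remark: the paper does not do this by hand. The minors are computed in Macaulay~2, and the identification of the ``other'' spinor coordinates (those not among the minimal generators) is likewise obtained by direct computation. Your by-hand plan is in principle possible, but the sign and block bookkeeping you flag as the ``main obstacle'' is exactly what the paper offloads to the machine.
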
  
\begin{proof}
We use Macaulay 2 \cite{M2} to compute $8\times 8$ minors of matrix $\delta_2^5$. Denote $\mathcal{M}^{K}_{L}$ as $8\times 8$ minors of matrix $\delta_2^5$ involving $K$ rows and $L$ columns. Then $\mathcal{M}^{2,3,4,5,6,7,8,9}_{\bar{1},2,3,4,5,6,7,8}=\pm g_1^2x_1$,

$$\mathcal{M}^{2,3,4,5,6,7,8,9}_{1,\bar{2},3,4,5,6,7,8}=\pm g_2^2x_1, \;\; \mathcal{M}^{2,3,4,5,6,7,8,9}_{1,2,\bar{3},4,5,6,7,8}=\pm g_3^2x_1, $$ 

$$\mathcal{M}^{2,3,4,5,6,7,8,9}_{1,2,3,\bar{4},5,6,7,8}=\pm g_4^2x_1, \;\; \mathcal{M}^{2,3,4,5,6,7,8,9}_{1,2,3,4,\bar{5},6,7,8}=\pm g_5^2x_1. $$

Then by \cite[Theorem 2]{CLW} spinor coordinates are  $$(\tilde{a}_3)_{\bar{1},2,3,4,5,6,7,8}=\pm g_1,\;\;
(\widetilde{a}_3)_{1,\bar{2},3,4,5,6,7,8}=\pm g_2, \;\; (\widetilde{a}_3)_{1,2,\bar{3},4,5,6,7,8}=\pm g_3, $$ 

$$(\tilde{a}_3)_{1,2,3,\bar{4},5,6,7,8}=\pm g_4, \;\; (\tilde{a}_3)_{1,2,3,4,\bar{5},6,7,8}=\pm g_5. $$
These $5$ spinor coordinates are part of minimal generating set of $I_5$ but not contained in $J_5$.
 
Other non zero spinor coordinates are of the form $\pm \alpha_ix_j\pm \alpha_kx_l$ and ${\rm Pf}(\hat{i})x_j$ where ${\rm Pf}(\hat{i})$ is the Pfaffian of $C$ obtained by omitting ith row and column. These other spinor coordinates are not among the minimal generating set of $I_5$.  
\end{proof}

\begin{remark}
Consider the following examples:
\begin{enumerate}[{\rm 1)}]
\item Let $R$ be a polynomial ring in $9$ variables on the entries of $3\times 3$ generic  matrix $X$. The ideal $I$ is generated by $2\times 2$ minors of matrix $X$. \\
\item Let $R$ be a polynomial ring in $8$ variables with ideal $I$ generated by the equation of Segre embedding of $\mathbb{P}^1\times \mathbb{P}^1\times \mathbb{P}^1$ into $\mathbb{P}^7$. 
\end{enumerate}
In both examples, $I$ is Gorenstein ideal of codimension $4$ with $9$ generators where none of the spinor coordinates are among the minimal generators of $I$ by \cite[Example 3]{CLW}.

Suppose 1) and 2) are specializations of resolution (\ref{(1,9,16,9,1)}) of $I_5$. Since specialization is a ring homomorphism which maps minors of the matrix to the minors of matrix, and thus maps spinor coordinates of one resolution to spinor coordinates of the other. In (1) and (2) there are none of the spinor coordinates are among minimal generators of $I$. This forces none of the spinors coordinates of resolution of $I_5$ in (\ref{(1,9,16,9,1)}) to be among minimal generators of $I_5$ which is contradiction by Proposition \ref{spin9}.
\end{remark} 
 
\section{Spinor coordinates of generic doubling of (1,4,7,4)}\label{spin8}
 In this section we discuss spinor coordinates of resolution (\ref{(1,8,14,8,1)}) of Gorenstein ring with $8$ generators obtained from generic doubling of $(1,4,7,4)$. We investigates number of spinor coordinates which are among the minimal generators of ideal $I_4$. 
 
 By \cite[Theorem 2]{CLW} there exists a hyperbolic basis of $\widetilde{R}^{14}$ in resolution (\ref{(1,8,14,8,1)}) for $m=2$, say $\{e_1,\ldots,e_7,e_{-1},\ldots,e_{-7}\}$ with hyperbolic pairs $\{e_i,e_{-i}\}$. Denote columns of $\delta_2^4$ with respect to $e_i$ and $e_{-i}$ as $i$ and $\bar{i}$ respectively. In \cite[Theorem 2]{CLW} spinor coordinates are denoted as $(\widetilde{a}_3)_K$  where $K\subset \{\pm 1,\ldots,\pm 7\}$ with cardinality of $K$ is $7$ with even number of $\bar{i}$. We find the number of spinor coordinates are among the minimal generators of ideal $I_4$.

\begin{proposition}\label{spinC8}
For $m=2$ in resolution (\ref{(1,8,14,8,1)}), at least $4$ spinor coordinates among the minimal generators of $I_4$. 
\end{proposition}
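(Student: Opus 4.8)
The plan is to mimic exactly the computational strategy of Proposition \ref{spin9}, now applied to the mapping-cone resolution (\ref{(1,8,14,8,1)}) with $n=2m=4$, whose middle free module has rank $2n+6 = 14$. First I would fix the hyperbolic basis $\{e_1,\dots,e_7,e_{-1},\dots,e_{-7}\}$ of $\widetilde{R}^{14}$ guaranteed by \cite[Theorem 2]{CLW}, label the columns of $\delta_2^4$ accordingly, and recall that the spinor coordinate $(\widetilde{a}_3)_K$ indexed by $K\subset\{\pm1,\dots,\pm7\}$ of cardinality $7$ (with an even number of barred indices, since $\delta_2^4$ sits in even position for the even-type doubling) is determined up to sign by a $7\times 7$ minor $\mathcal{M}^{K}_{L}$ of $\delta_2^4$ via the identity $\mathcal{M}^{K}_{L} = \pm (\widetilde{a}_3)_K\,(\text{entry of }\delta_1^4)$, in the same way $\mathcal{M}^{\cdots}_{\cdots}=\pm g_j^2 x_1$ was used in Proposition \ref{spin9}. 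Concretely, with $d_2$ and $\psi_1^4$ as written out in Section \ref{GenD} for $m=2$, I would use Macaulay2 \cite{M2} to evaluate the $7\times 7$ minors of $\delta_2^4=\left[\begin{smallmatrix} d_2 & \psi_1^4 \\ 0 & -d_3^t\end{smallmatrix}\right]$ obtained by deleting one ``barred'' column $\bar j$ (for $j=1,\dots,4$) from the block corresponding to the $g_j$'s, together with a suitable choice of the remaining six columns and seven rows; I expect these to come out as $\pm g_j^{\,2}\,x_1$ (or $\pm g_j^{\,2}$ times some fixed generator), exactly parallel to the odd case.

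From such an identity $\mathcal{M}^{K_j}_{L_j} = \pm g_j^{\,2}\,x_1$ one reads off $(\widetilde{a}_3)_{K_j} = \pm g_j$ for $j=1,2,3,4$, giving four spinor coordinates. The next step is to check that these four elements $g_1,\dots,g_4$ are part of a minimal generating set of $I_4 = J_4\widetilde R + \langle g_1,\dots,g_4\rangle$: since $I_4$ has $n+4 = 8$ minimal generators (the four $x_i$ together with the four $g_j$, as recorded by the Betti number $8$ in (\ref{(1,8,14,8,1)})) and the $g_j$ are the images under $\psi_0^4$ of a minimal generating set of $\omega_{\widetilde S_4}$, they are not redundant modulo $J_4$ plus the maximal ideal; hence all four survive in $I_4/\mathfrak m I_4$. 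This establishes ``at least $4$''.

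The main obstacle I anticipate is purely organizational rather than conceptual: namely choosing the row set and the complementary column set so that the $7\times 7$ minor really does factor as $\pm g_j^2 x_1$ and not as something that also involves the Pfaffians $w_i$ or the entries $v_{\{\alpha,\beta\},i}$, which would obscure the conclusion. Because $\delta_2^4$ has the block-triangular shape $\left[\begin{smallmatrix} d_2 & \psi_1^4 \\ 0 & -d_3^t\end{smallmatrix}\right]$ with $d_3^t = [\,C^t\ \ U^t\,]$, a Laplace expansion along the lower block $-d_3^t$ forces most of the chosen columns to lie in the $d_3^t$-part, pinning down which $C$-columns and $U$-columns are used; the residual $3\times 3$-type determinant from the $d_2$-and-$\psi_1^4$ part should then collapse, using $d_1 d_2 = 0$ and the explicit entries $x_i$, $\alpha_i$, $q_i$ of $\psi_1^4$, to the single term $\pm g_j^2 x_1$. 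I would verify this factorization symbolically in Macaulay2 for $m=2$ and, as in the $(1,9,16,9,1)$ case, remark that the remaining nonzero spinor coordinates have the shape $\pm\alpha_i x_j \pm \alpha_k x_l$ or ${\rm Pf}(\hat i)\,x_j$ and hence do not contribute new minimal generators, so that ``exactly $4$'' is plausible though the statement only claims ``at least $4$''.
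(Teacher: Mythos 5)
Your proposal follows essentially the same route as the paper: compute $7\times 7$ minors of $\delta_2^4$ in Macaulay2, recognize them as $\pm g_j^2 x_1$, invoke \cite[Theorem 2]{CLW} to read off the spinor coordinates $(\widetilde a_3)_{K_j}=\pm g_j$ for $j=1,\dots,4$, and note these lie among the minimal generators of $I_4$ but not in $J_4$. The paper records the specific row set $\{2,\dots,8\}$ and column sets (six barred indices plus one unbarred index from $\{4,\dots,7\}$) rather than leaving the choice to a Laplace-expansion heuristic as you do, but that is a bookkeeping difference the Macaulay2 computation would settle in any case.
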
 
 \begin{proof}
We use Macaulay 2 \cite{M2} to calculate $7\times 7$ minors of $\delta_2^4$ mentioned in resolution (\ref{(1,8,14,8,1)}) for $m=2$. Denote $\mathcal{M}^{K}_{L}$ as $7\times 7$ minors of matrix $\delta_2^4$ involving $K$ rows  and $L$ columns. Then 

$$\mathcal{M}^{2,3,4,5,6,7,8}_{\bar{1},\bar{2},\bar{3},\bar{4},\bar{5},\bar{6},7}=\pm g_1^2x_1, \;\; \mathcal{M}^{2,3,4,5,6,7,8}_{\bar{1},\bar{2},\bar{3},\bar{4},\bar{5},6,\bar{7}}=\pm g_2^2x_1, $$ 

$$\mathcal{M}^{2,3,4,5,6,7,8}_{\bar{1},\bar{2},\bar{3},\bar{4},5,\bar{6},\bar{7}}=\pm g_3^2x_1, \;\; \mathcal{M}^{2,3,4,5,6,7,8}_{\bar{1},\bar{2},\bar{3},4,\bar{5},\bar{6},\bar{7}}=\pm g_4^2x_1. $$

Then by \cite[Theorem 2]{CLW} spinor coordinates with respect to above $K$ columns are:

$$(\tilde{a}_3)_{\bar{1},\bar{2},\bar{3},\bar{4},\bar{5},\bar{6},7}=\pm g_1,\;\; (\tilde{a}_3)_{\bar{1},\bar{2},\bar{3},\bar{4},\bar{5},6,\bar{7}}=\pm g_2,$$

$$(\tilde{a}_3)_{\bar{1},\bar{2},\bar{3},\bar{4},5,\bar{6},\bar{7}}=\pm g_3,\;\; (\tilde{a}_3)_{\bar{1},\bar{2},\bar{3},4,\bar{5},\bar{6},\bar{7}}=\pm g_4,$$

These $4$ spinor coordinates are among minimal generators of ideal $I_4$ but not contained in $J_4$. 
\end{proof}
 
\section*{Acknowledgement} 
The author thanks Jerzy Weyman for helpful discussions. The author acknowledges support of Fulbright-Nehru fellowship.

 \bibliographystyle{amsplain}

\providecommand{\MR}[1]{\mbox{\href{http://www.ams.org/mathscinet-getitem?mr=#1}{#1}}}
\renewcommand{\MR}[1]{\mbox{\href{http://www.ams.org/mathscinet-getitem?mr=#1}{#1}}}
\providecommand{\href}[2]{#2}

\end{document}